\theoremstyle{definition}
 \numberwithin{equation}{section}
\newtheorem{theorem}{Theorem}[section]
\newtheorem{proposition}[theorem]{Proposition}
\newtheorem{lemma}[theorem]{Lemma}
\theoremstyle{definition}
\newcommand{\appsection}[1]{\let\oldthesection\thesection
\renewcommand{\thesection}{Appendix \oldthesection}
\section{#1}\let\thesection\oldthesection}
\theoremstyle{remark}
\newtheorem{remark}[theorem]{Remark}
\newtheorem{example}[theorem]{Example}
\DeclareMathOperator{\Cl}{Cl}
\DeclareMathOperator{\Pic}{Pic}
\DeclareMathOperator{\CM}{CM}
\def\D{{\mathbb{D}}}
\def\Z{{\mathbb{Z}}}
\def\Q{{\mathbb{Q}}}
\def\C{{\mathbb{C}}}
\def\P{{\mathbb{P}}}
\def\H{{\mathbb{H}}}
\def\cR{{\mathcal{R}}}
\def\O{{\mathcal{O}}}
\def\X{{\mathcal{X}}}
\def\Z{{\mathbb{Z}}}
\def\epsilon{\varepsilon}
\begin{document}
\bibliographystyle{amsplain}

\title{The Coble-Mukai lattice from $\Q$-Gorenstein deformations}

\author[Giancarlo Urz\'ua]{Giancarlo Urz\'ua}
\email{urzua@mat.uc.cl}
\address{Facultad de Matem\'aticas, Pontificia Universidad Cat\'olica de Chile, Campus San Joaqu\'in, Avenida Vicu\~na Mackenna 4860, Santiago, Chile.}

\begin{abstract}
We show some geometric properties of Enriques surfaces via $\Q$-Gorenstein smoothings of Coble surfaces. In particular, we explicitly identify the Enriques lattice of the general fiber with the Coble-Mukai lattice. At the end, we discuss applications to Gorenstein $\Q$-Homology projective planes with trivial canonical class.
\end{abstract}

\date{\today}

\maketitle

\tableofcontents

\section{Introduction} \label{s0}

Our ground field is $\C$. An \textit{Enriques surface} is a nonsingular projective surface $Z$ with $q(Z)=h^1(\O_Z)=0$, $p_g(Z)=h^2(\O_Z)=0$, and $2 K_Z \sim 0$. (An excellent survey on Enriques surfaces is \cite{Do16}, our main reference will be the two volumes book \cite{Enr20I, Enr20II}.) Degenerations of Enriques surfaces have been studied in e.g. \cite{Ku77}, \cite{M81}, \cite{P77}, \cite{K92}, \cite{CM94}. We are interested in $\Q$-Gorenstein degenerations (called moderate degenerations in \cite{K92}). 

An \textit{Enriques W-surface} is a normal projective surface $X$ together with a proper deformation $(X \subset \X) \to (0 \in \D)$, where $\D$ is a smooth curve germ, such that
\begin{enumerate}
\item $X$ has at most \textit{Wahl singularities}, this is, cyclic quotient singularities of type $\frac{1}{n^2}(1,na-1)$ with gcd$(a,n)=1$,
\item $\X$ is a normal $3$-fold with $K_{\X}$ $\Q$-Cartier,
\item the fiber $X_0$ is reduced and isomorphic to $X$,
\item the fiber $X_t$ is an Enrique surface for $t\neq 0$.
\end{enumerate}

(For more general W-surfaces and their birational geometry see \cite{U16a}.) Kawamata proves that the monodromy for this type of degenerations is trivial \cite[Section 2]{K92}, and that an Enriques W-surface $X$ with $K_X$ nef can have only singularities of type $\frac{1}{4}(1,1)$ \cite[Theorem 4.1]{K92} (i.e. these are flower pot degenerations \cite{P77}). In addition, if $X$ is indeed singular, then it must be rational. Let $\phi \colon V \to X$ be the minimal resolution, and let $\{ C_1,\ldots, C_s \}$ be the disjoint exceptional $(-4)$-curves. Then $$-2K_V \sim C_1 + \ldots + C_s,$$ and so $V$ is a \textit{Coble surface of K3 type} \cite[Section 9.1]{Enr20II}. By \cite[Corollary 9.1.5]{Enr20II}, we have that $s \leq 10$.  

For any given Enriques W-surface $X$, the purpose of this note is to directly compute some geometric properties of the Enriques surfaces $X_t$. For example, this point of view gives a direct way to identify the Coble-Mukai lattice of $V$ with the Picard group of $X_t$ modulo canonical class (see Theorem \ref{cm=pic}). The Coble-Mukai lattice was introduced by Mukai with no reference to a degeneration of Enriques surfaces. In \cite[Theorem 9.2.15]{Enr20II} it is proved that the Coble-Mukai lattice is isomorphic to the Enriques lattice, but there are no degenerations of Enriques surfaces involved either. We also explicitly describe $\Pic(X_t)$ from classes of curves in the singular surface $X$. As an application, we discuss nodal Enriques surfaces from Coble surfaces, and in particular some open questions on $\Q$-homology projective planes via Enriques W-surfaces.

\subsubsection*{Acknowledgments}
The author would like to thank Igor Dolgachev for the motivation to write this note, and for various useful comments and references. The author also thanks Matthias Sch\"utt for his interest and useful comments. The author is grateful to Phillip Griffiths and Carlos Simpson for the invitation to talk at the ``Moduli and Hodge theory" online conference at the Institute of the Mathematical Sciences of the Americas (IMSA) during February 1-5 2021, and for the invitation to write a paper for the corresponding proceedings. The author would like to thank the anonymous referee for interesting and useful observations. The author was supported by the FONDECYT regular grant 1230065. 

\section{Preliminaries} \label{s1}

Let $X$ be an Enriques W-surface with $K_X$ nef. Let $$\phi \colon V \to X$$ be the minimal resolution of $X$. As it was said in the introduction, the surface $V$ must be a \textit{Coble surface of K3 type}. In this way, we have two options for $V$ (see \cite[Prop. 9.1.3]{Enr20II}): it is the blow-up of either:

\begin{itemize}
\item[1.] an Halphen surface of index two \footnote{An Halphen surface of index two is a rational elliptic surface with a multiplicity two fiber, and no $(-1)$-curves in the fibers.} over the singularities of one reduced fiber, which is of type $II$, $III$, $IV$, or $I_n$, of , or

\item[2.] a Jacobian rational minimal elliptic fibration over the singularities of two reduced fibers of type $II$, $III$, $IV$, or $I_n$.
\end{itemize}

The blow-ups are as follows. Over a type $II$ fiber we only blow-up the cusp to obtain one $(-4)$-curve. For type $III$ we blow-up twice over the tangent point, and so we obtain two $(-4)$-curves over the fiber. In the case $IV$ we blow-up $4$ times over the triple point, so that we obtain four $(-4)$-curves. Finally, for any $I_n$ fiber we blow-up the $n$ nodes, so that we obtain $n$ $(-4)$-curves. 

Let $\pi \colon V \to S$ be the composition of the blow-ups. We obtain the boundary curves $\{C_1,\ldots,C_s\}$ in $V$ from the distinguished singular fiber(s). We recall that $C_i \simeq \P^1$ and $C_i^2=-4$, i.e. they are the $(-4)$-curves over the fibers. The contraction of the boundary curves gives the surface $X$. 

We will only consider the case when the singular fibers are of type $I_n$, the other cases are degenerations. Hence for one singular fiber we have $I_s$, and for two we have $I_{s_1}$ and $I_{s_2}$ with $s_1+s_2=s$. We have that $1 \leq s \leq 10$ by \cite[Corollary 9.1.4]{Enr20II}. 

One can actually prove that such an $X$ has no local-to-global obstructions to deform, and that any $\Q$-Gorenstein smoothing of it is an Enriques surface (see \cite[Theorem 4.2(0)]{U16b}, where there is a result for more general elliptic fibrations). The $\Q$-Gorenstein deformation space of $X$ has dimension $10$ (see \cite[Section 3]{H16}).   

\begin{example} [Blow-ups of Enriques W-surfaces] As above, consider an elliptic fibration $S \to \P^1$ with sections and at least two $I_1$ fibers $F_1$ and $F_2$. Let $V \to S$ be the blow-up of both nodes. We now blow-up over a general point in, say, $F_2$ to obtain the chain $[6,2,2]$ with a $(-1)$-curve intersecting the middle $(-2)$-curve, where the $(-6)$-curve is the proper transform of $F_2$ (see Figure \ref{f0}). We contract $[4]$ and $[6,2,2]$ to obtain a surface $Z$. One can prove that $Z$ has no local-to-global obstructions to deform. We consider a $\Q$-Gorenstein smoothing of $Z$. One can show that the general fiber is the blow-up of an Enriques surface at one point. We note that $K_{Z}$ is not nef because the image $A$ of the $(-1)$-curve intersecting the middle $(-2)$-curve is a negative curve for $K_{Z}$. There is a divisorial contraction of the family which contracts the $(-1)$-curves in the general fibers and $A$ in $Z$, and the resulting surface is an Enriques W-surface. 

\begin{figure}[htbp]
\includegraphics[width=2.5in]{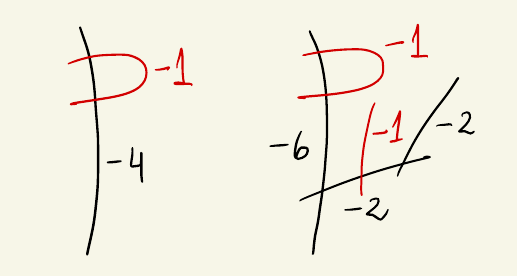}
\caption{The Wahl chains $[4]$ and $[6,2,2]$}
\label{f0}
\end{figure}

These blow-ups of Enriques W-surfaces come from the divisorial contraction universal family \cite{HTU12} defined by $\frac{1}{4}(1,1)$. This is explained in \cite[Section 2]{U16b}, precisely this particular divisorial contraction family is \cite[Example 2.13]{U16b}, where the combinatorial data with all the infinitely many possible Wahl singularities is given by $$[4]-[2,\bar{2},6]-[2,2,2,\bar{2},8]-[2,2,2,2,2,\bar{2},10]-\cdots .$$ For example, in Figure \ref{f3} we have a way to produce a similar situation but with three Wahl singularities, where the $(-1)$-curve in the general fiber (blow-up of an Enriques surface at one point) degenerates into the image of the $(-1)$-curve between $[8,2,2,2,2]$ and $[6,2,2]$.

\begin{figure}[htbp]
\includegraphics[width=2.5in]{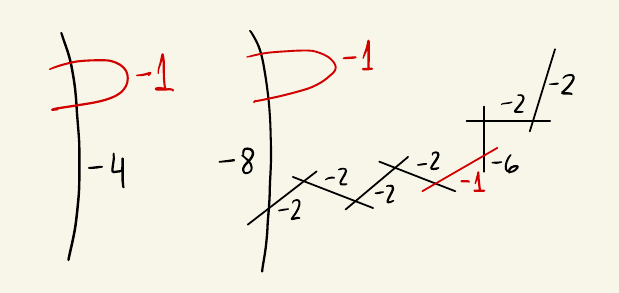}
\caption{The Wahl chains $[4]$, $[6,2,2]$, and $[8,2,2,2,2]$}
\label{f3}
\end{figure}

\end{example}

\begin{example}
In \cite[Example 4.5]{K92}, Kawamata shows the local picture of a $\Q$-Gorenstein smoothing of an elliptic fibration with a singular $I_d$ fiber with Wahl singularities $\frac{1}{n^2}(1,na-1)$ at the nodes. Kawamata explains how these local elliptic surface $\Q$-Gorenstein smoothens into other elliptic surfaces, and the degeneration is explained as the concurrence of a smooth fibre with multiplicity $n$ with $d$ $I_1$ fibers. 
\end{example}

Let $\beta_i$ be the class of $C_i$ in $\Pic(V)$. As in \cite[Chapter 9]{Enr20II}, we define the \textit{Coble-Mukai lattice} of $V$ as $$\CM(V):=\{ x \in \widetilde{\Pic}(V) \colon x \cdot \beta_i=0 \ \text{for all} \ i \}$$ where $\widetilde{\Pic}(V)$ is the lattice in $\Pic(V)_{\Q}$ generated by $\Pic(V)$ and the rational classes $\frac{1}{2} \beta_i$.

\section{Class and Picard groups} \label{s2}

Let $X$ be an Enriques W-surface with $K_X$ nef. We only consider Coble surfaces $V$ of K3 type from a rational minimal elliptic fibration $S \to \P^1$ with sections. Hence we have blow-ups at the nodes of two singular fibers $I_{s_1}$, $I_{s_2}$ with $s_1+s_2=s$. We have the diagram $$ \xymatrix{  & V  \ar[ld]_{\pi} \ar[rd]^{\phi} &  \\ S &  & X}$$ where the morphism $\phi$ is the minimal resolution of $X$, and $\pi$ is a composition of blow ups from $S$. The $(-4)$-curves $\{C_1,\ldots,C_s \}$ are the exceptional curves of $\phi$. We choose in $V$ a chain of smooth rational curves $$C_1 - (-1) - C_2 - (-1) - \ldots - (-1) - C_s $$ using one section from $S \to \P^1$, and $(-1)$-curves from the blow-up $\pi \colon V \to S$. The contraction of that chain defines a surface $W$ with one singularity of type $\frac{1}{4s}(1,2s-1)$, whose Hirzebruch-Jung continued fraction is $[3,2,\ldots,2,3]$ (with $(s-2)$ $2$'s). Hence we have a contraction $\sigma \colon X \to W$, and $W$ has no local-to-global obstructions to deform (same proof as for $X$). 

We now look at $\Q$-Gorenstein deformations of $W$, and then we will do it for $X$. All deformations below happen over $\D$, and $\Q$-Gorenstein deformation means that the canonical class for the corresponding $3$-fold is $\Q$-Cartier.

\begin{proposition}
The surface $W$ can be $\Q$-Gorenstein deformed into a surface $W_t$ with singularities of types either $A_{e_1-1}$, $\ldots$, $A_{e_r-1}$ or
$\frac{1}{4e_1}(1,2e_1-1),A_{e_2-1}$, $\ldots$, $A_{e_r-1}$, where $e_1$, $\ldots$, $e_r$ is a partition of $s$. In the case when we have only singularities of type $A_{e_i-1}$, the minimal resolution of $W_t$ is an Enriques surface.
\label{ade}
\end{proposition}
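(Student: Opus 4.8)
The plan is to classify the $\Q$-Gorenstein deformations of the single singularity of $W$ by passing to its canonical (index-one) cover, and then to globalize using the absence of local-to-global obstructions, so that the arithmetic of partitions of $s$ emerges directly from factoring one polynomial.

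First I would compute the index-one cover of $p:=\frac{1}{4s}(1,2s-1)$. Its index is $2$, and writing $p=\C^{2}/(\Z/4s)$ and running the cyclic action through the invariants shows that the cover is the rational double point $A_{2s-1}=\{xy=z^{2s}\}$, on which the quotient $\Z/2=(\Z/4s)/(\Z/2s)$ acts by $(x,y,z)\mapsto(-x,-y,-z)$. Under the standard dictionary between $\Q$-Gorenstein deformations of a cyclic quotient singularity and equivariant deformations of its canonical cover, the $\Q$-Gorenstein deformations of $p$ are exactly the $\Z/2$-invariant ones, i.e. the even deformations
\[
 xy=z^{2s}+a_{2s-2}z^{2s-2}+\cdots+a_{2}z^{2}+a_{0}=Q(z^{2}),\qquad Q(w)=w^{s}+a_{2s-2}w^{s-1}+\cdots+a_{0},
\]
a family of the expected dimension $s-1$. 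Factoring $Q(w)=\prod_{k=1}^{r}(w-w_{k})^{e_{k}}$ with distinct $w_{k}$ records a partition $e_{1}+\cdots+e_{r}=s$, and the surface $\{xy=\prod_{k}(z^{2}-w_{k})^{e_{k}}\}$ acquires an $A_{e_{k}-1}$ point over each root of $z^{2}=w_{k}$.

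Next I would read off the quotient. If every $w_{k}\neq 0$, then the origin does not lie on the surface, so $\Z/2$ acts freely; it interchanges the two roots $\pm\sqrt{w_{k}}$, gluing the two $A_{e_{k}-1}$ points into a single $A_{e_{k}-1}$ point of $W_{t}$, which yields case (a). If exactly one root vanishes, say $w_{1}=0$ of multiplicity $e_{1}$, then $z=0$ is the unique fixed point and the surface is locally $A_{2e_{1}-1}$ there; quotienting $A_{2e_{1}-1}$ by this same involution returns $\frac{1}{4e_{1}}(1,2e_{1}-1)$ (the canonical-cover computation run backwards), while the remaining roots behave as before, which yields case (b). Thus both cases, and every partition of $s$, are realized, and this step is essentially bookkeeping once the equivariant cover is in place. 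To globalize, I would invoke that $W$ has no local-to-global obstructions to $\Q$-Gorenstein deformation (by the same argument as for $X$): the $\Q$-Gorenstein deformation functor of $W$ is then smooth and surjects onto that of $p$, so each local deformation above is induced by a global one-parameter $\Q$-Gorenstein deformation $(W\subset\W)\to(0\in\D)$ with general fiber $W_{t}$ of the asserted type.

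Finally, in case (a) I would identify the minimal resolution $\widetilde{W_{t}}$. Since $\chi(\O)$ is constant in flat families and $W$ is rational, $\chi(\O_{W_{t}})=1$; as the $A_{e_{k}-1}$ are rational double points the minimal resolution is crepant, so $\chi(\O_{\widetilde{W_{t}}})=1$, and because $K^{2}$ is locally constant in $\Q$-Gorenstein families one gets $K_{\widetilde{W_{t}}}^{2}=K_{W_{t}}^{2}=0$. These numerical invariants alone do \emph{not} determine the surface, so the essential point is to pin down the Kodaira dimension, which jumps from $\kappa(W)=-\infty$ to $\kappa(\widetilde{W_{t}})=0$. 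I would settle this by deformation equivalence: the general $\Q$-Gorenstein smoothing of $W$ is an Enriques surface (by the smoothing results for such elliptic contractions, cf. \cite{U16b, K92}), and smoothing the remaining $A_{e_{k}-1}$ points together with a simultaneous resolution of rational double points exhibits $\widetilde{W_{t}}$ as deformation equivalent to that Enriques surface; a surface deformation equivalent to an Enriques surface is Enriques. The step I expect to be the main obstacle is exactly this last one: the invariants $\chi=1$, $K^{2}=0$ are equally consistent with a non-minimal rational surface, so ruling out $\kappa=-\infty$ genuinely requires the global input that the $\Q$-Gorenstein smoothing changes the Kodaira dimension, the Milnor-fibre topology of the T-singularity being what forces $\widetilde{W_{t}}$ to be Enriques rather than rational. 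By contrast, the equivariant computation in the first step, though the conceptual heart, is computationally routine.
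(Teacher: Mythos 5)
Your proposal is correct and follows essentially the same route as the paper, whose entire proof consists of the observation that $W$ has no local-to-global obstructions plus a citation of \cite[Proposition 2.3]{HP10}: your equivariant canonical-cover computation is precisely the content (and in fact the method of proof) of that cited result, so you have reproved it from scratch rather than invoked it. Two small points. First, an off-by-one: the invariant family $xy=Q(z^2)$ has $s$ parameters $a_0,a_2,\dots,a_{2s-2}$, not $s-1$; the number $s-1$ is the rank of $H_2$ of the Milnor fibre of the $\Q$-Gorenstein smoothing (cf.\ Lemma \ref{homo}), which is what you seem to have conflated with the dimension of the deformation space --- harmless, since the dimension plays no role in your argument. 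Second, your final step via Brieskorn--Tyurina simultaneous resolution and deformation equivalence is valid, but it can be shortened: from $-2K_V\sim C_1+\dots+C_s$ one gets $2K_X\sim 0$, hence $2K_W\sim 0$, and since $h^1(\O_W)=0$ cohomology and base change lets the trivializing section of $2K_{\mathcal W}|_W$ extend and remain nonvanishing on nearby fibers, so $2K_{W_t}\sim 0$; as the resolution of the Du Val points is crepant, $2K_{\widetilde{W_t}}\sim 0$ (not merely $K^2=0$), so $\widetilde{W_t}$ is automatically minimal of Kodaira dimension $0$, and $\chi(\O)=1$ then forces Enriques by classification, with no rational surface to exclude.
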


\begin{proof}
We know that there are no local-to-global obstructions to deform $W$. The rest is \cite[Proposition 2.3]{HP10}.
\end{proof}

The partial resolution $\sigma \colon X \to W$ is locally an M-resolution of the singularity in $W$ (cf. \cite{BC94}). Any $\Q$-Gorenstein deformation of the singularity in $W$ is the blowing-down deformation (see \cite[(11.4)]{KM92}) of a $\Q$-Gorenstein deformation of the exceptional divisor of $\sigma$. The deformations in Proposition \ref{ade} into singular surfaces happen over a proper analytic closed set of the deformation space of the singularity \cite{HP10}, and so for almost all $\Q$-Gorenstein smoothings of $X$, the blowing-down deformation gives an isomorphism between general fibers. 

\begin{lemma}
Let $(W \subset \mathcal W) \to (0 \in \D)$ be a $\Q$-Gorenstein smoothing. Let $P$ be the singularity $\frac{1}{4s}(1,2s-1)$ in $W$. Let $M_P$ be the Milnor fiber corresponding to the induced local smoothing of $P$. Then, there is an exact sequence $$ 0 \to H_2(M_P) \simeq \Z^{s-1} \to H_2(W_t) \to H_2(W) \to 0,$$ where $W_t$ is a smooth fiber (and so an Enriques surface). 
\label{homo}
\end{lemma}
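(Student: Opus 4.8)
The plan is to reduce everything to a single \emph{specialization long exact sequence} attached to the smoothing, then to compute the three groups by Euler-characteristic bookkeeping together with a Mayer--Vietoris analysis along the link of $P$. First I would localize. Since $(W\subset \mathcal W)\to(0\in\D)$ is an isomorphism away from $P$, the smooth fiber $W_t$ is obtained from $W$ by excising a cone neighborhood of $P$ (a cone over the link $L=\partial M_P$, a lens space $L(4s,2s-1)$) and gluing in the Milnor fiber $M_P$ along $L$. Writing $W^{\circ}=W\setminus\{P\}$, this gives $W_t=W^{\circ}\cup_L M_P$ and $W=W^{\circ}\cup_L cL$ with $cL$ the contractible cone. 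Collapsing $M_P\subset W_t$ to the point $P$ defines the specialization map $c\colon W_t\to W$; since $cL$ is contractible, $W_t/M_P\simeq W$ and hence $H_n(W_t,M_P)\cong\widetilde H_n(W)$. Feeding this into the long exact sequence of the pair $(W_t,M_P)$ gives
\[
\cdots\to H_n(M_P)\xrightarrow{\ \iota_*\ }H_n(W_t)\xrightarrow{\ c_*\ }\widetilde H_n(W)\xrightarrow{\ \partial\ }H_{n-1}(M_P)\to\cdots,
\]
and the stated sequence is the degree-$2$ segment, once I prove that the two flanking connecting maps vanish and that $H_2(M_P)\cong\Z^{s-1}$.

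Next I would pin down $H_*(M_P)$. As the Milnor fiber of a smoothing of a normal surface singularity, $M_P$ has the homotopy type of a CW-complex of real dimension $\le 2$, so $H_n(M_P)=0$ for $n\ge 3$ and $H_2(M_P)$ is free; hence it suffices to compute Betti numbers. For the Euler number I would use the blow-down $V\to S$ and the contraction $V\to W$: from $\chi(S)=12$ and the $s$ blow-ups, $\chi(V)=12+s$; contracting the connected chain $E=C_1-(-1)-\cdots-C_s$ of $2s-1$ rational curves (so $\chi(E)=2s$) to $P$ gives $\chi(W)=12+s-2s+1=13-s$; and the inclusion--exclusion $\chi(W_t)=\chi(W)-1+\chi(M_P)$ with $\chi(W_t)=12$ yields $\chi(M_P)=s$. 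Since $W$ is a $\Q$-homology manifold with $b_1=b_3=0$, rational Poincar\'e duality gives $b_2(W)=11-s$; a Mayer--Vietoris computation over $\Q$ along the rational homology sphere $L$ then gives $b_2(M_P)=b_2(W_t)-b_2(W)=10-(11-s)=s-1$, whence $b_1(M_P)=1+b_2(M_P)-\chi(M_P)=0$ and $H_2(M_P)\cong\Z^{s-1}$.

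For left-exactness I must show $\partial\colon\widetilde H_3(W)\to H_2(M_P)$ is zero, for which it is enough that $H_3(W)=0$. Here I would model $W$ as the quotient $V/E$ obtained by collapsing the chain $E$ to $P$, so that $\widetilde H_n(W)\cong H_n(V,E)$. Since $V$ is a simply connected rational surface, the relevant segment reads $0\to H_3(V,E)\to H_2(E)\xrightarrow{\ j\ }H_2(V)$, and $j$ is injective because the intersection form of the chain $E$ is negative definite (it contracts to a quotient singularity). Thus $H_3(W)=0$, and $\iota_*$ is injective.

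The main obstacle is right-exactness: surjectivity of $c_*\colon H_2(W_t)\to H_2(W)$, equivalently the vanishing of $\partial\colon H_2(W)\to H_1(M_P)$. Rationally this is automatic from the rank count above, but integrally both $H_2(W_t)$ and $H_2(W)$ carry $2$-torsion (coming from $\tfrac12\sum_i C_i\sim -K_V$) and $H_1(M_P)$ is a nontrivial finite group, so the torsion must be tracked. I would compare the Mayer--Vietoris sequences of $W=W^{\circ}\cup_L cL$ and $W_t=W^{\circ}\cup_L M_P$ through the map induced by $c$; chasing the resulting ladder shows that $c_*$ is onto if and only if $\ker(\beta_W)\subseteq\ker(\beta_M)$, where $\beta_W\colon H_1(L)\to H_1(W^{\circ})$ and $\beta_M\colon H_1(L)\to H_1(M_P)$ are the two filling maps of the link $L=L(4s,2s-1)$. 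This inclusion is the crux. I expect to verify it from the explicit geometry: $\beta_W$ is governed by the divisibility relation $\sum_i C_i\sim -2K_V$ on the global side, while $\beta_M$ is the reduction map determined by the Wahl (rational homology ball) pieces of the $\Q$-Gorenstein Milnor fiber on the local side. In the base case $s=1$ both kernels equal the index-$2$ subgroup of $H_1(L)=\Z/4$, and I would establish the inclusion in general by the same matching of local and global torsion. Granting it, $\partial=0$, $c_*$ is surjective, and the degree-$2$ segment collapses to the asserted short exact sequence.
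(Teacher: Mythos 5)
Your setup is sound and in places matches the paper's: your collapsing argument identifying $H_n(W_t,M_P)\cong \widetilde H_n(W)$ is essentially the cited Lemma 2.2.3 of Hacking, your Euler-characteristic bookkeeping giving $b_2(M_P)=s-1$ is correct, and your proof that $H_3(W)=0$ via the model $W=V/E$ and negative definiteness of the chain is a valid (and nicely elementary) alternative to the paper's route through Koll\'ar's duality results for $W^{o}=W\setminus\{P\}$. But the proof has a genuine gap exactly where you flag it: right-exactness. Your reduction to the inclusion $\ker(\beta_W)\subseteq\ker(\beta_M)$ of the two filling maps $H_1(L)\to H_1(W^{\circ})$, $H_1(L)\to H_1(M_P)$ is never established — you verify only $s=1$ and then write ``I expect,'' ``I would establish,'' ``granting it.'' That inclusion is the entire content of the lemma beyond rank counts, and nothing in your argument supplies it: your computation of $M_P$ is purely rational ($\chi(M_P)=s$, $b_1(M_P)=0$), so you never determine the torsion of $H_1(M_P)$, which is precisely what the obstruction $\partial\colon H_2(W)\to H_1(M_P)$ lives in.

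The missing ingredient is the structural fact the paper uses: a $\Q$-Gorenstein smoothing of $\frac{1}{4s}(1,2s-1)$ is the $\Z/2$-quotient of a smoothing of its canonical cover $A_{2s-1}$, acting freely on the nearby fibers; since the $A_{2s-1}$ Milnor fiber is simply connected, this gives $\pi_1(M_P)\cong H_1(M_P)\cong \Z/2$ integrally (and, incidentally, recovers $H_2(M_P)\cong\Z^{s-1}$ without your global Euler-number chase). With this in hand, your own long exact sequence closes the gap in one line: since $H_1(W)=0$ (immediate from your model $W=V/E$ with $V$ simply connected and $E$ connected), the sequence continues $H_2(W)\xrightarrow{\partial} H_1(M_P)\to H_1(W_t)\to 0$, and the resulting surjection $\Z/2\twoheadrightarrow H_1(W_t)\cong\Z/2$ (Enriques) is forced to be an isomorphism, so $\partial=0$. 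This is the paper's argument, and it replaces your unproven local-global torsion matching entirely; without it (or an actual proof of your kernel inclusion), your proposal does not prove surjectivity of $H_2(W_t)\to H_2(W)$.
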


\begin{proof}
Let $W^o=W \setminus \{P\}$. By \cite[Prop.4.2(3)]{Ko05}, we have $H_1(W^o)$ is torsion, so $H^1(W^o)=0$. By \cite[Prop.4.2(1)]{Ko05}, we have $H^1(W^o)\simeq H_3(W)$ and so $H_3(W)=0$. By \cite[Lemma 2.2.3]{H16}, we have then the exact sequence $$ 0 \to H_2(M_P) \to H_2(W_t) \to H_2(W) \to H_1(M_P) \to H_1(W_t) \to 0,$$ since $H_1(W)=0$.
On the other hand, we know that $H_2(M_P)=\Z^{s-1}$, $H_1(M_P)=\Z/2$, and $H_1(W_t)=\Z/2$, and so we have the wanted exact sequence. We recall that locally at $P$, a $\Q$-Gorenstein smoothing of $\frac{1}{4s}(1,2s-1)$ is a $\Z/2$-quotient of a smoothing of the canonical cover $A_{2s-1} \to \frac{1}{4s}(1,2s-1)$, and on the smooth fibers is a topological covering (see e.g. \cite[Section 2.2]{HP10}). 
\end{proof}

\begin{theorem}
Consider the partial resolution $\sigma \colon X \to W$ of $P$ (hence $X$ has $s$ singularities of type $\frac{1}{4}(1,1)$ in a chain of $\P^1$'s), and let us consider an Enriques W-surface $X$. Let $(W \subset \mathcal W) \to (0 \in \D)$ be the induced blowing-down $\Q$-Gorenstein deformation, and assume that it is isomorphism on general fibers (i.e. $X_t \simeq W_t$). 

Then, there is an exact sequence  $$ 0 \to H_2(X_t) \to H_2(X) \to (\Z/2)^{s-1} \to 0.$$ The group $(\Z/2)^{s-1}$ has as generators the images of the $(-1)$-curves between the $\frac{1}{4}(1,1)$ singularities. We have $H_2(X_t)=\Pic(X_t)$, and $H_2(W)=\Cl(W)$ (the class group of $W$).
\label{sequence}
\end{theorem}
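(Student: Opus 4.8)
The plan is to run, for the smoothing $(X\subset\mathcal X)\to(0\in\D)$ itself, the same Milnor-fibre argument that proves Lemma \ref{homo} for $W$, now taking advantage of the fact that the $s$ singularities of $X$ are the Wahl singularities $\frac{1}{4}(1,1)$. The decisive difference with the $W$-case is that the Milnor fibre $M_i$ of a $\Q$-Gorenstein smoothing of $\frac{1}{4}(1,1)$ is a rational homology ball, i.e. $H_2(M_i)=0$ and $H_1(M_i)=\Z/2$ (this is the $n=2$ instance of the general Wahl smoothing; see \cite[Section 2.2]{HP10}). It is precisely the vanishing $H_2(M_i)=0$ that will make $H_2(X_t)\to H_2(X)$ injective, in contrast with Lemma \ref{homo}, where $H_2(M_P)=\Z^{s-1}\neq 0$.

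First I would collect the two vanishing statements $H_3(X)=0$ and $H_1(X)=0$. For $H_3(X)=0$ I would repeat verbatim the argument in the proof of Lemma \ref{homo}: applying \cite[Prop.~4.2]{Ko05} to $X^o=X\setminus\{\text{singular points}\}$ gives that $H_1(X^o)$ is torsion, hence $H^1(X^o)=0\cong H_3(X)$. For $H_1(X)=0$ I would argue topologically: the minimal resolution $V$ is a rational surface, hence simply connected, and $X$ is obtained from $V$ by collapsing the disjoint simply connected curves $C_1,\dots,C_s$ to points, which does not change the fundamental group; thus $\pi_1(X)=1$ and $H_1(X)=0$. Feeding the smoothing into the evident several-point version of \cite[Lemma~2.2.3]{H16}, exactly as in Lemma \ref{homo}, produces
\[
0 \to \bigoplus_{i=1}^{s} H_2(M_i) \to H_2(X_t) \to H_2(X) \xrightarrow{\ \partial\ } \bigoplus_{i=1}^{s} H_1(M_i) \to H_1(X_t) \to H_1(X) \to 0 .
\]
Substituting $H_2(M_i)=0$, $H_1(M_i)=\Z/2$, $H_1(X_t)=\Z/2$ and $H_1(X)=0$ collapses this to
\[
0 \to H_2(X_t) \to H_2(X) \xrightarrow{\ \partial\ } (\Z/2)^{s} \to \Z/2 \to 0 ,
\]
in which the last map is surjective, so $\operatorname{im}(\partial)=\ker\big((\Z/2)^s\to\Z/2\big)$ has order $2^{\,s-1}$; this yields the asserted short exact sequence $0\to H_2(X_t)\to H_2(X)\to(\Z/2)^{s-1}\to 0$.

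It remains to identify the generators and record the two side identifications. The map $\partial$ reads off, for a surface class, its collection of linking classes in $\bigoplus_i H_1(M_i)$, and I would use the standard basis $e_1,\dots,e_s$ of $(\Z/2)^s$. The image $A_j\subset X$ of the $j$-th $(-1)$-curve of $V$ passes through exactly the two points $p_j,p_{j+1}$ (the images of $C_j,C_{j+1}$) and through no other singularity, so I expect $\partial[A_j]=e_j+e_{j+1}$; the $s-1$ classes $e_1+e_2,\dots,e_{s-1}+e_s$ form a basis of $\ker\big((\Z/2)^s\to\Z/2\big)=(\Z/2)^{s-1}$, which shows that the $(-1)$-curve images generate the quotient. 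Finally $H_2(X_t)=\Pic(X_t)$ by Poincar\'e duality together with $p_g(X_t)=q(X_t)=0$ (so all of $H^2(X_t;\Z)$ is algebraic), and $H_2(W)=\Cl(W)$ because the cycle class map is an isomorphism for the rational surface $W$ with quotient singularities.

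The step I expect to be the main obstacle is the explicit evaluation of $\partial$ on the curve classes $[A_j]$: one must verify that a smooth branch of $A_j$ through a $\frac{1}{4}(1,1)$ point maps to the nontrivial element of $H_1(M_i)=\Z/2$ at each of $p_j$ and $p_{j+1}$, equivalently that the local linking class in $H_1(L_i)\cong\Z/4$ of such a branch (here $L_i$ is the lens-space link of $p_i$) survives the reduction $H_1(L_i)\to H_1(M_i)$. This is where the description of the Wahl smoothing as a $\Z/2$-quotient of a smoothing of the $A_1$ canonical cover, recalled at the end of the proof of Lemma \ref{homo}, must be invoked to pin down $(\Z/2)^s\to\Z/2$ as the sum map; everything else is a formal consequence of the two exact sequences.
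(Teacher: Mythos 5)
Your proposal is correct, but it takes a genuinely different route from the paper. The paper's proof factors through the contraction $\sigma\colon X\to W$: it starts from the restriction morphism $H_2(X_t)\to H_2(X)\simeq H_2(W)\oplus\bigoplus_{i=1}^{s-1}\Z\Gamma_i$ and combines it with Lemma \ref{homo} (where the Milnor fibre of the T-singularity $\frac{1}{4s}(1,2s-1)$ carries the vanishing cycles, $H_2(M_P)\simeq\Z^{s-1}$), deferring the diagram chase to \cite[Section 8]{TU21} and citing \cite[p.1196]{H13} for $H_2(W)=\Cl(W)$; this is also why the hypothesis $X_t\simeq W_t$ appears in the statement. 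You instead run Hacking's sequence \cite[Lemma 2.2.3]{H16} directly on the smoothing of $X$ itself, exploiting that the Milnor fibres of the $s$ singularities $\frac{1}{4}(1,1)$ are rational homology balls ($H_2(M_i)=0$, $H_1(M_i)=\Z/2$): injectivity of $H_2(X_t)\to H_2(X)$ is then immediate, and the cokernel is $\ker\bigl((\Z/2)^s\to H_1(X_t)=\Z/2\bigr)\simeq(\Z/2)^{s-1}$, with surjectivity of the last map coming for free from $H_1(X)=0$ (your collapsing argument for $\pi_1$ is fine, since the $C_i$ are disjoint and simply connected). Your evaluation $\partial[A_j]=e_j+e_{j+1}$ is also sound and is exactly the parity statement the paper records separately in Remark \ref{localmult}: a branch with $D'\cdot C_i=1$ generates $H_1(L_i)\simeq\Z/4$, which surjects onto $H_1(M_i)\simeq\Z/2$ for the rational homology ball filling; and since the $s-1$ classes $e_j+e_{j+1}$ are independent and the image of $\partial$ is $(s-1)$-dimensional, the generator claim follows without even needing to pin down the map to $H_1(X_t)$ as the sum map (though your canonical $A_1$-cover argument does that too). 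What each approach buys: yours is self-contained, avoids the citation to \cite{TU21}, makes the $(\Z/2)^{s-1}$ and its generators completely explicit, and in fact never uses the hypothesis $X_t\simeq W_t$; the paper's route through $W$ simultaneously exhibits $H_2(X_t)$ as an extension of $H_2(W)=\Cl(W)$ by the vanishing cycles of the $[3,2,\dots,2,3]$ singularity, which is the structure exploited later (Lemma \ref{int}, Proposition \ref{ade}, and the $\Q$-homology projective plane constructions of Section \ref{s5}). The one point you should make explicit rather than flag as an expectation is the identification of $\partial$ with the linking-class map $H_2(X)\to H_2(X,X^o)\simeq\bigoplus H_1(L_i)\to\bigoplus H_1(M_i)$, but this is standard and consistent with how the paper uses local multiplicities.
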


\begin{proof}
Let $\Gamma_i$ be the images in $X$ of the $(-1)$-curves between the $C_i$. This short exact sequence is achieved through Lemma \ref{homo}, starting with the restriction morphism $$H_2(X_t) \to H_2(X) \simeq H_2(W) \oplus_{i=1}^{s-1} \Z \Gamma_i.$$ For more details see \cite[Section 8]{TU21}, where we work out a more general picture for $\Q$-Gorenstein smoothings. For the last part see \cite[p.1196]{H13}.
\end{proof}

\begin{remark}
Since $q(X)=p_g(X)=0$, we have identifications $\Pic(X_t)=H^2(X_t)=H_2(X_t)$, $\Pic(X)=H^2(X) \subset H_2(X)=\Cl(X)$ (see \cite{Ko05}), and Mumford's intersection theory on $X$ which respects the topological intersection theory when restricted to $H^2(X)$. Hence the restriction map $H_2(X_t) \to H_2(X)$ of the sequence in Theorem \ref{sequence} respects intersections. 
\end{remark}

\begin{remark}
Let $\Gamma$ be the image in $X$ of a $(-1)$-curve from the chain  $$C_1 - (-1) - C_2 - (-1) - \ldots - (-1) - C_s.$$ Any divisor $D$ in $\Cl(X)$ corresponds to $D=\sum_{i=1}^{s-1} d_i \Gamma_i$ in $(\Z/2)^{s-1}$, where $d_i$ is the local multiplicity of $D$ at any of the singularities in $\Gamma_i$. Note that numerically $$\phi^*(D) \equiv D' + \sum_{i=1}^{s} \mu_i C_i,$$ where $D'$ is the strict transform, and so $D' \cdot C_i=4 \mu_i= d_i$ for $i=1,\ldots, s-1$ as $H_1=\Z/4$ for the link at each singularity. 
\label{localmult}
\end{remark}

\section{Coble-Mukai lattice and Enriques lattice} \label{s3}

Let $X$ be an Enriques W-surface as in Theorem \ref{sequence}, and let $V$ be the associated Coble surface of K3 type. Let $\sigma \colon V \to X$ be the contraction of the boundary $\{C_1,\ldots,C_s\}$.  We have the short exact sequence  $$ 0 \to \Pic(X_t) \to \Cl(X) \to (\Z/2)^{s-1} \to 0.$$ (Note that for $s=1$ it also works, and so $\Pic(X_t) \simeq \Cl(X)$ in that case.)

\begin{lemma}
The image of $\Pic(X_t)$ in $\Cl(X)$ is the set of classes whose proper transform have even intersection number with each $C_i$.
\label{int}
\end{lemma}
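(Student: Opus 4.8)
The plan is to read off the image of $\Pic(X_t)$ directly from the short exact sequence $0 \to \Pic(X_t) \to \Cl(X) \to (\Z/2)^{s-1} \to 0$ of Theorem \ref{sequence}, using the explicit formula for the quotient map supplied by Remark \ref{localmult}. First I would recall that, for $D \in \Cl(X)$ with proper transform $D'$ in $V$, one has $\phi^*(D) \equiv D' + \sum_{j=1}^s \mu_j C_j$ with $\mu_i = (D'\cdot C_i)/4$ (forced by orthogonality $\phi^*(D)\cdot C_i = 0$ and $C_i^2=-4$), and that the $i$-th coordinate $d_i \in \Z/2$ of the image of $D$ is the reduction of $D'\cdot C_i$ modulo $2$, for $i=1,\dots,s-1$. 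Since the image of $\Pic(X_t)$ in $\Cl(X)$ is precisely the kernel of this map, it follows at once that this image consists of the classes $D$ for which $D'\cdot C_i$ is even for $i=1,\dots,s-1$.

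It remains only to upgrade this to even intersection with all $s$ curves. For this I would use the relation $C_1+\cdots+C_s \sim -2K_V$ valid on a Coble surface of K3 type (recalled in the introduction). Intersecting a proper transform $D'$ with it gives
$$\sum_{i=1}^s D'\cdot C_i = -2\,(K_V\cdot D'),$$
which is even, so the number of indices $i$ with $D'\cdot C_i$ odd is even. In particular, if $D'\cdot C_i$ is even for $i=1,\dots,s-1$ then $D'\cdot C_s$ is even as well, and conversely. Hence the condition that $D'\cdot C_i$ be even for $i=1,\dots,s-1$ is equivalent to the condition that $D'\cdot C_i$ be even for all $i=1,\dots,s$, which together with the first paragraph proves the claim.

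The one point requiring care is the identification of the $i$-th component of the quotient map with $D'\cdot C_i \bmod 2$, as opposed to the full local class $D'\cdot C_i \bmod 4$ in the local class group $\Z/4$ of the corresponding $\frac14(1,1)$ point. The reason is that the defect recorded by the exact sequence is the group $H_1 \cong \Z/2$ of the Milnor fibre of the $\frac14(1,1)$ smoothing, so the relevant invariant is the image of the local class under the surjection $\Z/4 \twoheadrightarrow \Z/2$; this image is exactly $D'\cdot C_i$ modulo $2$. Once this is granted, the short parity computation with $C_1+\cdots+C_s \sim -2K_V$ completes the argument with no further obstacles.
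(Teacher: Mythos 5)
Your proof is correct and takes essentially the same route as the paper: it identifies the image of $\Pic(X_t)$ with the kernel of the map $\Cl(X) \to (\Z/2)^{s-1}$ from Theorem \ref{sequence}, computed via Remark \ref{localmult} as $d_i \equiv D' \cdot C_i \pmod 2$ for $i=1,\dots,s-1$. Your second paragraph, deducing evenness of $D' \cdot C_s$ from the relation $C_1+\cdots+C_s \sim -2K_V$, explicitly settles the $s$-th curve --- a step the paper's one-line proof leaves implicit --- and is a worthwhile addition.
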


\begin{proof}
In Remark \ref{localmult} we have $$\phi^*(D) \equiv D' + \sum_{i=1}^{s} \mu_i C_i,$$ where $D' \cdot C_i=4 \mu_i=d_i$ and $(d_1,\ldots d_{s-1})$ is the image of $D$ in $(\Z/2)^{s-1}$. But $\Pic(X_t)$ is exactly the kernel. 
\end{proof}

\begin{theorem}
The image of $\Pic(X_t)$ in $\Cl(X)$ quotient by $\langle \O_{X_t}(K_{X_t}) \rangle$ is isomorphic to $\CM(V)$.
\label{cm=pic}
\end{theorem}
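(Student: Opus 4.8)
The plan is to realise the isomorphism through the Mumford $\Q$-pullback $\sigma^{*}$ and to reduce the whole statement to one divisibility fact about the boundary curves. Set
\[
L=\{D\in\Pic(V)\colon D\cdot C_i\in 2\Z\ \text{for all}\ i\}.
\]
Identifying $\Cl(X)$ with $\Pic(V)/\langle C_1,\ldots,C_s\rangle$ via $\sigma_{*}$ and representing each class by its proper transform, Lemma \ref{int} says precisely that the image $A$ of $\Pic(X_t)$ in $\Cl(X)$ is $L/\langle C_1,\ldots,C_s\rangle$. I would take the map of the theorem to be $\sigma^{*}$: a class of $A$ represented by $D\in L$ is sent to
\[
\sigma^{*}[D]=D+\sum_{i=1}^{s}\tfrac14\,(D\cdot C_i)\,C_i\in\Pic(V)_{\Q}.
\]
Since $D\cdot C_i$ is even the coefficients lie in $\tfrac12\Z$, so $\sigma^{*}[D]\in\widetilde{\Pic}(V)$; and $\sigma^{*}[D]\cdot C_j=D\cdot C_j+\tfrac14(D\cdot C_j)C_j^{2}=0$, so in fact $\sigma^{*}[D]\in\CM(V)$. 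Replacing $D$ by $D+C_k$ changes the formula by $C_k+\tfrac14(C_k^{2})C_k=0$, so this descends to a well-defined homomorphism $A\to\CM(V)$.

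Surjectivity is immediate: any $x\in\CM(V)$ can be written $x=D+\sum_i\tfrac{b_i}{2}C_i$ with $D\in\Pic(V)$ and $b_i\in\Z$, and $0=x\cdot C_j=D\cdot C_j-2b_j$ forces $D\cdot C_j=2b_j$ even, whence $D\in L$ and $x=\sigma^{*}[D]$. Thus $\CM(V)$ is a quotient of $A$ and it remains to compute the kernel. From the formula, $\sigma^{*}[D]=0$ in $\Pic(V)_{\Q}$ iff $D$ lies in the rational span $\langle C_1,\ldots,C_s\rangle_{\Q}$, so the kernel of $A\to\CM(V)$ equals $\bigl(L\cap\langle C_1,\ldots,C_s\rangle_{\Q}\bigr)/\langle C_1,\ldots,C_s\rangle$.

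The heart of the argument, and the step I expect to be the main obstacle, is to show this kernel is exactly $\Z/2$, generated by the class of $K_V=-\tfrac12\sum_i C_i$. Writing $D=\sum_i q_iC_i$ and reducing modulo $\langle C_i\rangle$ one may assume $q_i\in\{0,\tfrac12\}$, so the kernel is the set of subsets $I\subseteq\{1,\ldots,s\}$ for which $\tfrac12\sum_{i\in I}C_i\in\Pic(V)$, i.e. for which $\sum_{i\in I}C_i$ is $2$-divisible. Since $V$ is rational, $\Pic(V)$ is unimodular, so $\sum_{i\in I}C_i\in 2\Pic(V)$ iff $\bigl(\sum_{i\in I}C_i\bigr)\cdot\beta$ is even for every $\beta\in\Pic(V)$. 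I would test this against the two natural families of curves from the construction of $V$: each $C_i$ is the proper transform of a component of an $I_n$ fibre, and the $(-1)$-curves of $\pi\colon V\to S$ over the nodes meet two cyclically consecutive $C_i$'s once each, so intersecting with all of them forces the indicator of $I$ to be constant around each fibre's cycle, i.e. $I$ is a union of entire fibres; intersecting next with (the proper transform of) a section, which meets each fibre in a single component, shows that one whole fibre gives an odd intersection and is therefore not $2$-divisible. Hence $I$ is empty or all of $\{1,\ldots,s\}$, and since $\sum_i C_i=-2K_V$ is $2$-divisible with half $-K_V$, the kernel is precisely $\langle[K_V]\rangle\cong\Z/2$.

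Finally I would identify $[K_V]$ with the canonical class. The class $[K_V]=K_X$ lies in $A$ because $K_V\cdot C_i=2$ is even, it is nonzero (as $-\tfrac12\sum_iC_i\notin\langle C_i\rangle$) and $2$-torsion (as $2K_V=-\sum_iC_i\in\langle C_i\rangle$). Since $A\cong\Pic(X_t)\cong\Z^{10}\oplus\Z/2$ for the Enriques surface $X_t$, whose torsion subgroup is generated by $\O_{X_t}(K_{X_t})$, this unique nonzero $2$-torsion class must be the image of $\O_{X_t}(K_{X_t})$. Therefore $\sigma^{*}$ induces an isomorphism $A/\langle\O_{X_t}(K_{X_t})\rangle\xrightarrow{\ \sim\ }\CM(V)$, which moreover respects the intersection pairings since Mumford's form on $\Cl(X)$ satisfies $[D]\cdot[D']=\sigma^{*}[D]\cdot\sigma^{*}[D']$.
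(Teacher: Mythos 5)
Your proposal is correct, and it realizes the isomorphism through the same map as the paper: the Mumford $\Q$-pullback from $\Cl(X)$ to $\widetilde{\Pic}(V)$, with surjectivity deduced from Lemma \ref{int} exactly as in the paper's proof (your computation $0=x\cdot C_j=D\cdot C_j-2b_j$ is the paper's argument verbatim). Where you genuinely diverge is the kernel. The paper dispatches it in one line on the Enriques side: if $\phi^*(D)=0$ then $\phi^*(2D)=0$ in $\Pic(V)$, so $D$ is a numerically trivial torsion class on the Enriques surface $X_t$, and Riemann--Roch forces $D\sim K_{X_t}$; this is short but leans entirely on the known structure of $\Pic$ of the general fiber. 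You instead compute the kernel intrinsically on the Coble surface: identifying it with $\bigl(L\cap\langle C_1,\ldots,C_s\rangle_{\Q}\bigr)/\langle C_1,\ldots,C_s\rangle$ and then classifying which half-sums $\tfrac12\sum_{i\in I}C_i$ are integral by pairing against the $(-1)$-curves of $\pi$ (forcing $I$ to be a union of fibre cycles) and against a section (killing a single fibre), leaving only $-K_V=\tfrac12\sum_iC_i$. This is more work but self-contained and quantitative: it effectively reproves, inside $\Pic(V)$, that the boundary admits exactly one nontrivial half-integral class, rather than importing this from torsion on $X_t$ --- though note you still invoke the Enriques fiber at the final step, to match $[K_V]$ with $\O_{X_t}(K_{X_t})$ via the unique nonzero torsion element of $A\cong\Z^{10}\oplus\Z/2$, so the two proofs are not independent in that respect. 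Two small remarks: your use of a section is legitimate here because Section \ref{s2} restricts to Jacobian fibrations (in the Halphen index-two case there is only a $2$-section, but then there is a single distinguished fibre and the cycle argument alone suffices); and for an $I_1$ fibre the exceptional $(-1)$-curve meets the single $C_i$ twice rather than ``two consecutive components once each,'' which is harmless since a one-component fibre is automatically a whole cycle. Also, unimodularity of $\Pic(V)$ is invoked but not actually needed: you only use the trivial direction (integrality of $\tfrac12\sum_{i\in I}C_i$ forces even pairings) together with the explicit $2$-divisibility of $\sum_iC_i=-2K_V$.
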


\begin{proof}
We prove this through the pull-back morphism $$\phi^* \colon \Cl(X) \to \widetilde{\Pic}(V).$$ First, the pull-back on any class in $\Cl(X)$ is orthogonal to all $\beta_i$. So we now restrict to $\phi^* \colon \Pic(X_t) \to \CM(V)$. 

Let $D= D'+ \sum_{i=1}^s \frac{a_i}{2} \beta_i$ in $\CM(V)$ with $D'$ not supported at the $\beta_i$, and $a_i \in \Z$. Then by definition $D \cdot \beta_i=0$ and so $D' \cdot C_i$ is even for all $i$, and so $\phi^*$ is onto, by Lemma \ref{int}. If $\phi^*(D) = D'+ \sum_{i=1}^s \frac{a_i}{2} \beta_i=0$, then $\phi^*(2D)=0$ in $\Pic(V)$. Say that $D \neq 0$ in $\Pic(X_t)$, and so we have a numerical 2-torsion class, and this implies $D \sim K_{X_t}$ by Riemann-Roch on $D$.    
\end{proof}

In \cite[Theorem 9.2.15]{Enr20II}, it is proved that the Coble-Mukai lattice of a Coble surface is isomorphic to the Enriques lattice over $\C$ by a different method. 

\section{Explicit computations for the first degeneration} \label{s4}

In this section, we explicitly compute the Picard group of an Enriques surface from the class group of $X$, which has two $\frac{1}{4}(1,1)$ singularities and it is constructed as follows. Consider a pencil of cubic curves generated by two nodal cubics $C_1, C_2$ which intersect at nine distinct points. Let $S \to \P^2$ be the blow-up at those points, and so $S$ has an elliptic fibration with two $I_1$ fibers $C_1$, $C_2$ (proper transforms of $C_1, C_2$), and at least nine sections $R_1,\ldots,R_9$ from the nine points blown-up. Let $\pi \colon V \to S$ be the blow-up at the nodes of $C_1$ and $C_2$, and let $E_1, E_2$ be the exceptional curves. Let $H$ be the pull-back in $V$ of a general line in $\P^2$.

As before, we contract the $(-4)$-curves $C_1$ and $C_2$ (proper transforms of $C_1$ and $C_2$) to obtain $X$ which has no local-to-global obstructions to deform. Let $H$, $E_1$, $E_2$, $R_1$, $\ldots$, $R_9$ be the images of the corresponding curves in $X$. We see them as classes in $\Cl(X)$. In this way, we can easily compute $$\Cl(X)= \frac{\langle H,E_1,E_2,R_1,\ldots,R_9 \rangle}{ \langle 2E_1- 2E_2,2E_1-3H+R_1+R_2+\ldots+R_9 \rangle}.$$ Indeed, in $\Cl(V)$ we have that $C_i = 3H-2E_i-R_1-R_2-\ldots-R_9$, $C_1-C_2=-2E_1+2E_2$, and $C_1,C_2$ are contracted by $V \to X$.  

We have $H^2=11/2$, $H\cdot R_i=3/2$, $R_i^2=-1/2$, $E_i^2=0$, and $R_i\cdot R_j=1/2$ for $i \neq j$. Consider an Enriques W-surface $X$ as in Theorem \ref{sequence}, so that we have the short exact sequence 

\begin{equation} 
0 \to \Pic(X_t) \to \Cl(X) \to \Z/2 \to 0. 
\label{seq} 
\end{equation} 
\vspace{0.05cm}

We can think of $\Z/2$ as generated by the image (say) of $R_9$ (any $R_i$ works in this case, since they are locally toric boundaries for both singularities). Note that $$K_{X} = E_1-E_2,$$ and it represents the canonical class of $X_t$.

Using that $\Pic(W_t)$ is the kernel in the short exact sequence (\ref{seq}) and numerical independence of classes, we can compute $$ \Pic(X_t)= \langle E_1,E_1+2R_9,R_1-R_2, R_5-R_6, R_6-R_7, \ \ \ \ \ \ \ \ \ \ \ \ \ \ \ \ \ \ \ \ \ \ \ \ \ \ \ \ \ \ \ \ \ \ \ \ $$ $$ \ \ \ \ \ \ \ \ \ \ \ \ \ \ \ \ \ \ \ \ \ \ \ \ R_2-R_3, R_3-R_4, R_4-R_5,  R_7-R_8,H-R_1-R_2-R_3 \rangle +  K_W,$$ and these generators give a basis for the Enriques lattice $\H \oplus E_8(-1)$, where $$\H=\langle E_1,E_1+2R_9 \rangle$$ and $$E_8(-1)= \langle R_1-R_2,R_2-R_3, R_3-R_4, \ \ \ \ \ \ \ \ \ \ \ \ \ \ \ \ \ \ \ \ \ \ \ \ \ \ \ \ \ \ \ \ \ \ \ \ \ \ \ \ \ \ \ \ \ \ \ \ \ \ \ \ \ \ \ \ \ \ \ \ \ \ \ \ \ \ \ \ \ \ \ \ $$ $$ \ \ \ \ \ \ \ \ \ \ \ \ \ \ \ \ \ \ \ \ \ \ R_4-R_5, R_5-R_6, R_6-R_7, R_7-R_8,H-R_1-R_2-R_3 \rangle.$$

Following the recipe and notation in \cite[Section 1.5]{Enr20I}, we have the root basis for the Enriques lattice $$\alpha_0:=H-R_1-R_2-R_3, \ \ \alpha_1:=R_1-R_2,$$ $$ \ \alpha_2:=R_2-R_3, \ \ \alpha_3:=R_3-R_4,$$ $$\alpha_4:=R_4-R_5, \ \ \alpha_5:=R_5-R_6, $$ $$ \ \ \alpha_6:=R_6-R_7, \ \ \alpha_7:=R_7-R_8,$$ $$ \alpha_8:=-3H+E_1+\sum_{j=1}^7R_j+2R_8, \ \ \ \alpha_9:= 2R_9.$$
 
We also write down the following isotropic sequence (\cite[Section 1.5]{Enr20I}): $$ f_{10}= E_1, \ \ \ f_{9}= E_1 +2 R_9, \ \ \ f_{i}=  -3H + 2 E_1 + \sum_{j=1}^8 E_j + R_i + 2R_9 $$ for $i=1,\ldots,7$, and so $\Delta= \frac{1}{3} (f_1+\ldots+f_{10}) = -8H +6 E_1 + 3 \sum_{j=1}^9 R_j$.

\section{Further degenerations and $\Q$-Homology projective planes} \label{s5}


As explained in \cite[5.9]{Enr20I}, the Baily-Borel compactification of the moduli space of Enriques surfaces is formed by a Coble divisor, which is the moduli space of Coble surfaces, and two smooth rational curves \cite[Theorem 5.9.8]{Enr20I}. Hence the Baily-Borel compactification suggests that we should see a big part ``closer to the boundary" of the moduli space of Enriques surfaces via Enriques W-surfaces (see \cite[5.10]{Enr20I}). In this section we show what sort of things one can actually see. 

We start with the most degenerate Coble surface (see \cite[Example 9.2.7]{Enr20II}). It has $10$ $\frac{1}{4}(1,1)$ singularities, and it is unique up to isomorphism (see \cite[Chapter 9]{Enr20II}). This surface corresponds to one of the two most algebraic K3 surfaces \cite{V83} under the canonical double cover. (In fact, the K3 surface is $X_4$ in Vinberg's notation, which is birational to the $4$-th cyclic cover of $\P^2$ branched at $xyz(x+y+z)=0$.) We take as a model the following construction. We start with the rational elliptic fibration $S' \to \P^1$ which has exactly $4$ singular fibers: one $I_9$ and three $I_1$. This surface has Mordell-Weil group isomorphic to $\Z/3$ (see e.g. \cite{P90}). Figure \ref{f1} gives the notation for all relevant curves.

\begin{figure}[htbp]
\includegraphics[width=3.5in]{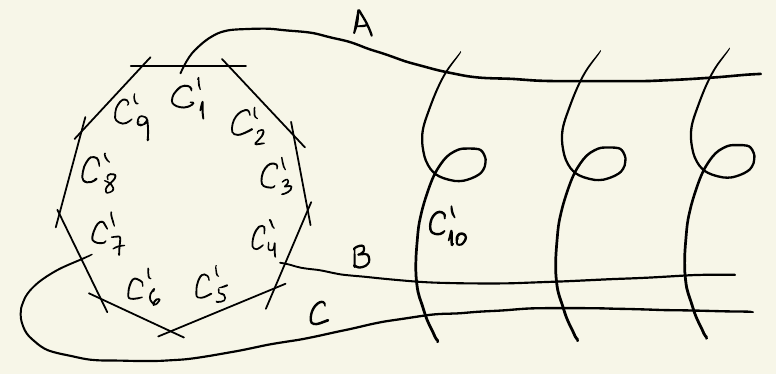}
\caption{Relevant curves in $S'$}
\label{f1}
\end{figure}

Let $\pi' \colon V' \to S'$ be the blow-up at all $9$ nodes in $I_9$, and at the node in $I_1=C'_{10}$. As we have done it before, we keep the notation for proper transforms of curves. Let $E'_i$ be the $(-1)$-curves from the blow-up. Hence we consider in $V'$ the chain $$C'_{10} - A - C'_1 - E'_1 - C'_2 - E'_2 - \ldots - E'_8 - C'_9 $$ where ${C'_i}^2=-4$ and $A^2=-1$. Hense $E'_9$ is between $C'_1$ and $C'_9$, and $E'_{10}$ is intersecting $C'_{10}$ at two distinct points. We now contract all the $C'_i$ via $\phi' \colon V' \to X'$, so $X'$ has $10$ $\frac{1}{4}(1,1)$ singularities, and $K_{X'} \equiv 0$. Indeed, we have that $$C'_{10}+2 E'_{10} \sim F \sim \sum_{i=1}^{9} C'_i + 2 \sum_{i=1}^{9} E'_i,$$ where $F$ is a general fiber of the elliptic fibration $V' \to \P^1$, and $$\phi'^*(K_{X'}) - \frac{1}{2} \sum_{i=1}^{10} C'_{i} \equiv K_{V'} \sim -F + \sum_{i=1}^{10} E'_{i}.$$ The surface $X'$ has no local-to-global obstructions to deform, because we are contracting curves over two fibers of $S' \to \P^1$ (see e.g. \cite[Section 4]{PSU13}). Our notation for relevant curves in $X'$ is $\Gamma_0:= E'_1$, $\Gamma_1:= E'_2$, $\Gamma_2:= E'_3$, $\Gamma_3:= E'_4$, $\Gamma_4:= E'_5$, $\Gamma_5:= E'_6$, $\Gamma_6:= E'_7$, $\Gamma_7:= E'_8$, $\Gamma_8:= E'_9$, $\Gamma_9:= A$, $\Gamma_{10}:= B$, $\Gamma_{11}:=C$, and $\Gamma_{12}:= E'_{10}$, which is represented in Figure \ref{f2}. 

\begin{figure}[htbp]
\includegraphics[width=3.5in]{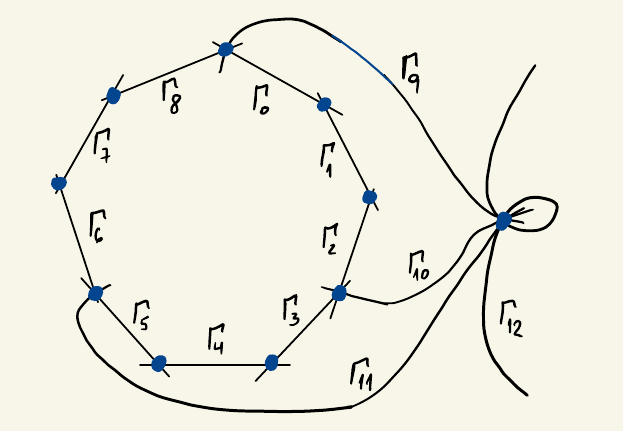}
\caption{Relevant curves in $X'$}
\label{f2}
\end{figure}

In this case we have that $$\Cl(X')=H_2(X')=\langle \Gamma_0,\Gamma_1,\ldots,\Gamma_{12} \rangle/\langle \cR_1, \cR_2, \cR_3 \rangle $$ where the relations are:

\bigskip

$\cR_1:=-2\Gamma_0-2\Gamma_1-2 \Gamma_2 -\Gamma_3+ \Gamma_4+ 3\Gamma_5+ 3\Gamma_6 +\Gamma_7-\Gamma_8 -\Gamma_9 -\Gamma_{10} +2\Gamma_{11} $

\bigskip

$\cR_2:=-\Gamma_0 -3\Gamma_1 -5\Gamma_2 -4\Gamma_3 + 4\Gamma_5+ 5\Gamma_6 +3\Gamma_7 +\Gamma_8 -3\Gamma_{10} + 3 \Gamma_{11}$

\bigskip

$\cR_3:=-\Gamma_0 -3\Gamma_1 -5\Gamma_2 -3\Gamma_3+ 3\Gamma_4+ 9\Gamma_5+ 10\Gamma_6+ 6\Gamma_7+ 2\Gamma_8 -\Gamma_9 -4\Gamma_{10}+ 5\Gamma_{11} -2 \Gamma_{12}$

\bigskip

Let $X$ be the surface constructed in Section \ref{s4}. We have the curves $H,E_1,E_2,R_1,\dots,R_9$. One can see $X$ as a $\Q$-Gorenstein smoothing of $X'$. Indeed we could choose a general $\Q$-Gorenstein smoothing of $X'$ which smooths all singularities except the singularities at $\Gamma_0 \cap \Gamma_8$ and at the node of $\Gamma_{12}$. We can recover all curves $H,E_1,E_2,R_1,\dots,$ $R_9$ from linear combinations of the curves $\Gamma_0$, $\ldots$, $\Gamma_{12}$, and in particular we can read $\Pic(X_t)$ from $\Cl(X')$. Details in a more general setup will be given in \cite{TU21}. We note that in this case $$ 0 \to \Pic(X_t) \to \Cl(X') \to (\Z/2)^{9} \to 0.$$

On the other hand, we note that we have many subchains in $\Gamma_0,$ $\dots,$ $\Gamma_{11}$ which can be contracted to singularities of type $[3,2,\dots,2,3]$.  The relevance of that is the application of Proposition \ref{ade} to particular deformations of $W'$, which is the contraction of some disjoint chains of $\Gamma_i$'s. Let us take the maximal chain $\Gamma_1+\dots+\Gamma_9$. Let $X' \to W'$ be the contraction of that chain. In this way, the surface $W'$ has one T-singularity, whose minimal resolution corresponds to the Hirzebruch-Jung continued fraction $$\frac{40}{19}=[3,2,2,2,2,2,2,2,2,3].$$ We recall that $W'$ has no local-to-global obstructions to deform (same reason as for $X'$), and so we can choose from Proposition \ref{ade} a global $\Q$-Gorenstein deformation which deforms $\frac{1}{40}(1,19)$ into an $A_9$ rational double point. This is a one dimensional family. In this way, the general fiber $W'_t$ is a Gorenstein $\Q$-Homology projective plane with $K\equiv 0$. We recall that a normal projective surface with only quotient singularities whose second Betti number is $1$ is called a  $\Q$-Homology projective plane, and it is Gorenstein if all singularities are rational double points. In \cite{HKO15}, Hwang, Keum and Ohashi classified all possible configurations of singularities for Gorenstein $\Q$-Homology projective planes. In the case of $K \equiv 0$ there are $31$ possible configurations, and they were able to produce examples for $29$ of them. We can have at most $5$ singularities in these configurations. Of course, the minimal resolution of all of them are Enriques surfaces with particular ADE configurations of nine $(-2)$-curves. 

In \cite{S21}, Sch\"utt classifies all Gorenstein $\Q$-Homology projective planes. It turns out that for each of the $31$ types we have a one dimensional moduli space, and the number of components for each type varies from $1$ to $3$:

\begin{itemize}
\item $3$ components for $A_7+2A_1$, and $3A_3$;

\item $2$ components for $A_8+A_1$, $A_7+A_2$, $A_5+A_3+A_1$, $A_5+2A_2$, $D_8+A_1$, $D_6+A_3$, $D_4+A_3+2A_1$, $E_8+A_1$, $E_7+A_2$, $E_6+A_3$, $E_7+A_2$, $E_8+A_1$, $D_9$, and $D_7+2A_1$;

\item all the other $15$ root types.
\end{itemize} 

\begin{theorem}
The one dimensional family from $W'$ whose general fiber has an $A_9$ singularity is in closure of the Sch\"utt irreducible moduli corresponding to $A_9$.
\end{theorem}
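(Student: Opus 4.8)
The plan is to first pin down the isomorphism type of the general fibre $W'_t$ and then feed it into Sch\"utt's classification. First I would apply Proposition \ref{ade} to $W'$, whose unique singular point is the T-singularity $\frac{1}{40}(1,19)$ (the case $s=10$), choosing the trivial partition $e_1=s=10$. This is precisely the case of Proposition \ref{ade} in which the general fibre acquires a single rational double point $A_{e_1-1}=A_9$ and no other singularity, and in which the minimal resolution of $W'_t$ is an Enriques surface. Since $W'$ has no local-to-global obstructions to deform and its only singular point is this $\frac{1}{40}(1,19)$, the globally deformed $W'_t$ carries exactly one $A_9$ point and is smooth elsewhere.

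Next I would record the global invariants. Writing $\widetilde{W'_t}$ for the minimal resolution (an Enriques surface), the surface $W'_t$ is obtained from $\widetilde{W'_t}$ by contracting the nine $(-2)$-curves of the $A_9$-configuration; this contraction is crepant, so $K_{W'_t}\equiv 0$, and $b_2(W'_t)=b_2(\widetilde{W'_t})-9=10-9=1$. As the singularity is a rational double point, $W'_t$ is Gorenstein. Hence each $W'_t$ with $t\neq 0$ is a Gorenstein $\Q$-homology projective plane with $K\equiv 0$ of root type $A_9$, i.e. a point of the moduli space $\mathcal{M}_{A_9}$ of \cite{S21}.

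Finally I would invoke Sch\"utt's result \cite{S21} that $\mathcal{M}_{A_9}$ is irreducible of dimension one (the type $A_9$ sits among the fifteen root types with a single component, not appearing in either the two- or three-component lists). The one-parameter family $W'_t$ defines a moduli morphism $\D\setminus\{0\}\to \mathcal{M}_{A_9}$; since $\mathcal{M}_{A_9}$ is the unique component carrying $A_9$-surfaces, the image lies entirely in $\mathcal{M}_{A_9}$, so the whole family, together with its limit at $t=0$ given by $W'$, lies in the closure $\overline{\mathcal{M}_{A_9}}$. This proves the statement, exhibiting $W'$ as a boundary point of the $A_9$-moduli.

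The literal containment is therefore forced by the irreducibility of $\mathcal{M}_{A_9}$ and requires no genuine work beyond the bookkeeping above. The main obstacle appears only if one wants the sharper assertion that the family is non-constant, hence dominates $\mathcal{M}_{A_9}$ rather than collapsing to a point. The cleanest route would be a deformation-theoretic/period argument: the $\Q$-Gorenstein deformation space of the single T-singularity $\frac{1}{40}(1,19)$ surjects onto the first-order deformations of $W'_t$ transverse to the equisingular locus, and one checks that the induced tangent map to $\mathcal{M}_{A_9}$ is nonzero, equivalently that the period of the K3 double cover of $\widetilde{W'_t}$ genuinely varies along the family. This is the only step where something beyond invoking Proposition \ref{ade} and Sch\"utt's classification is needed; the identification with Sch\"utt's $A_9$ component is then automatic.
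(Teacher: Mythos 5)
Your proposal is correct and takes essentially the same route as the paper, which states this theorem with no separate proof: the intended justification is precisely the preceding construction (contraction of the maximal chain to the single T-singularity $\frac{1}{40}(1,19)$, Proposition \ref{ade} with the trivial partition $e_1=s=10$, and the absence of local-to-global obstructions for $W'$) combined with Sch\"utt's result that the $A_9$ root type has an irreducible one-dimensional moduli, all of which you spell out, including the correct bookkeeping $b_2(W'_t)=10-9=1$, $K_{W'_t}\equiv 0$, and Gorensteinness. Your closing caveat about non-constancy of the induced moduli map is a genuine further point but, as you correctly observe, it is not needed for the literal containment statement, and the paper does not address it either.
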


We can actually obtain almost all of the $31$ types from Enriques W-surfaces just as explained for $A_9$, and a small variant when we meet the type $D$ or $E$ singularities. We only give two examples below, but it is possible to realize at least $27$ types. 

\begin{example} Consider the rational elliptic fibration with sections $S \to \P^1$ whose singular fibers are $I_4^*+2I_1$ (see e.g. \cite[p.7]{P90}). It has exactly two sections. Let us fix one of these sections, we denote it by $A$. Let $B$ and $C$ be the $I_1$ fibers, and let $D$ be $I_4^*$ minus the component intersecting $A$. We blow-up the node at $B$ and the node at $C$, and then we contract the section $A$. We obtain a surface $V$ with $B^2=C^2=-3$, and $D$ is a $D_8$ configuration, i.e., the minimal resolution of a $D_8$ singularity. The contraction of $B+C$ produces a surface $W$ with a singularity $[3,3]$, and it has no-global-to-local obstructions. Moreover, since $D$ is disjoint to $B+C$, the contraction $W'$ of $D$ in $W$ has also no local-to-global obstructions \cite[Section 4]{PSU13}. Therefore, by Proposition \ref{ade}, we can choose to $\Q$-Gorenstein deform $W'$ so that we keep the $D_8$ singularity and we deform $[3,3]$ into $A_1$. In this way, we obtain a $\Q$-Homology projective plane with $K \equiv 0$ and root type $D_8+A_1$. According to Sch\"utt's result, this root type corresponds to a moduli curve with two components. 

Similarly, we can construct a surface for the root type $D_5+A_4$ from the rational elliptic surface with sections whose singular fibers are $I_1^*+I_4+I_1$. Here the Mordell-Weil group is $\Z/4$ \cite{P90}. Just choose one of the four sections, and blow-up a suitable node in $I_4$ and the node in $I_1$, and then consider $D_5$ and $[3,2,2,2,3]$ to construct what we want through Proposition \ref{ade}. This root type has an irreducible moduli. To get other root types, we need to consider more involved situations. For example, we may start with a special Halphen surface of index $2$ and some suitable double sections.        
\label{d}
\end{example}

Compactifications of Sch\"utt moduli curves should correspond to degenerations of Enriques surfaces, since there are no non-isotrivial smooth families of Enriques surfaces over proper bases by \cite[Corollary 5.9.13]{Enr20I}. Hence, a natural question is: \textit{Find compactifications of Sch\"utt moduli curves and the corresponding limit singular surfaces. Which root types hit Enriques W-surfaces? What else?}


\end{document}